\newtheorem{theorem}{Theorem}[section]
\newtheorem{defn}{Definition}[section]
\newtheorem{coro}{Corollary}[section]
\newtheorem{lemma}{Lemma}[section]
\newtheorem{remark}{Remark}[section]
\def\BibTeX{{\rm B\kern-.05em{\sc i\kern-.025em b}\kern-.08em
    T\kern-.1667em\lower.7ex\hbox{E}\kern-.125emX}}
\begin{document}

\title{Multi-Agent Coverage Control on Surfaces Using Conformal Mapping}

\author{Chao Zhai, Yuming Wu \thanks{Chao Zhai and Yuming Wu are with School of Automation, China University of Geosciences, Wuhan 430074, China, and with Hubei Key Laboratory of Advanced Control and Intelligent Automation for Complex Systems and also with Engineering Research Center of Intelligent Technology for Geo-Exploration, Ministry of Education. (Email: zhaichao@cug.edu.cn)}
}

\maketitle

\begin{abstract}
Real-time environmental monitoring using a multi-agent system (MAS) has long been a focal point of cooperative control. It is still a challenging task to provide cost-effective services for potential emergencies in surface environments. This paper explores the transformation of a general surface into a two-dimensional (2D) disk through the construction of a conformal mapping. Multiple agents are strategically deployed within the mapped convex disk, followed by mapping back to the original surface environment. This approach circumvents the complexities associated with handling the difficulties and intricacies of path planning. Technical analysis encompasses the design of distributed control laws and the method to eliminate distortions introduced by the mapping. Moreover, the developed coverage algorithm is applied to a scenario of monitoring surface deformation. Finally, the effectiveness of the proposed algorithm is validated through numerical simulations.
\end{abstract}
\begin{IEEEkeywords}
Coverage control, multi-agent systems, conformal mapping.
\end{IEEEkeywords}

\section{Introduction}
\IEEEPARstart{W}{ith} the rapid advancement of robotics techniques, the integration of intelligent sensors and mobile robots is pervading diverse sectors, including wilderness search and rescue \cite{macwan2014multirobot}, border patrols \cite{farinelli2017distributed}, area coverage \cite{cortes2004coverage}, and environmental monitoring \cite{cassandras2005sensor}. In comparison to individual robots, the collaborative execution of tasks by multiple agents not only enhances efficiency but also enhances overall performance. Multi-agent systems seek to delve into the mechanisms and principles governing the coordination and collective problem-solving abilities \cite{zhai2021cooperative}.

Multi-agent coverage control has attracted more and more attentions in recent years. The central challenge in achieving effective coverage of a given environment with multi-agent system lies in formulating control laws that are tailored to the specific characteristics of the environment. These control laws are designed to expedite coverage tasks while ensuring a high standard of coverage quality. Addressing the former concern often involves considering region partitioning schemes, among which common methods include grid partition \cite{kim2016guidance}, equal task division \cite{zhai2023sectorial}, and Voronoi partition \cite{alitappeh2016distributed}. Grid partition, known for its regularity, divides a 2D plane into uniformly shaped grid cells. It can also be extended to three-dimensional (3D) space using cube grids \cite{ecker2021synthetic}. However, grid partition may result in resource wastage in regions with uneven environmental density. Equal task partition aims to balance workloads among regions and is often used in scenarios like sweep coverage \cite{zhai2013decentralized}. However, its adaptability to dynamic environments is limited. Voronoi partition, widely utilized in research, has been integrated into coverage optimization with region constraints. Nevertheless, traditional methods are largely confined to 2D convex environments \cite{jiang2019higher}. For non-convex regions, some studies have introduced geodesic distance as a new metric, accommodating non-convex nature \cite{thanou2013distributed}\cite{wu2023multi}. In higher-dimensional spaces, research has explored 3D Voronoi partition algorithms considering virtual forces \cite{dang2019target}\cite{wang2023cooperative}, partially addressing 3D Voronoi partition. However, direct application in complex 3D environments may lead to the loss of environmental features. As a result, scholars have extended coverage to hybrid environments with mixed dimensions using geodesic Voronoi partitioning, though typically limited to the fusion of one and two dimensions \cite{liu2021multi}. While these partition methods offer diverse insights into multi-agent coverage, each has limitations, especially regarding adaptability to environmental characteristics and the complexity of higher-dimensional spaces.

For the issue of ensuring coverage quality, in convex regions, it is common to optimize the deployment of agents by solving for the centroids of Voronoi subcells to minimize the cost function. However, in non-convex regions, due to the non-convex nature of the environment, Voronoi subcells may no longer possess convex properties, and the centroids obtained may not exist within the environment. Some studies have addressed this issue by considering the effectiveness of centroids and projecting ineffective centroids onto the boundaries of obstacles, albeit leading to additional path planning \cite{breitenmoser2010voronoi}. To address this issue, some scholars have proposed a mapping approach to transform non-convex regions into convex ones, allowing iterative optimization of the cost function within the mapped region \cite{caicedo2008performing}. Such mapping techniques have also found application in medical imaging \cite{wang2011brain}. However, multi-agent coverage control based on mapping mostly remains in 2D non-convex environments. Moreover, the mapping inevitably introduces region distortions, and the biases caused by distortions accumulate over time, resulting in the performance degradation of coverage algorithms. This occurs because agents may allocate tasks and optimize deployments based on the distorted maping. Furthermore, the mapped environment may lead to the loss of shapes of some regions, making them appear larger or smaller, which affects the balance of task allocation, consequently impairing the quality and efficiency of coverage.

The mapping based coverage methods have demonstrated effectiveness in certain scenarios. However, they still encounter challenges when applied to coverage monitoring on general surfaces. First of all, these methods are primarily confined to 2D planes. When extended to higher dimensions or non-convex environments, they incur significant computational costs, making them impractical for direct application to general surface environments. Secondly, when tackling surface coverage problems, they fail to ensure theoretical optimality and provide inadequate evaluation metrics for coverage performance. Last but not least, complex computational procedures are necessary during partition, and during the search for optimal deployment, points that may not exist within the environment may be identified, requiring additional path planning.

As a remedy, this paper endeavors to develop a distributed coverage control algorithm specifically crafted for monitoring surface environments and the optimization of coverage performance. In essence, the core contributions can be summarized as follows:
\begin{enumerate}
\item By constructing a conformal homeomorphic mapping, we achieve the transformation of a surface environment into a 2D disk. This transformation effectively converts a 2D manifold into a 2D disk, simplifying the environment to a flat, 2D plane. Notably, given the bijective nature of the constructed mapping, coverage control operations executed on the 2D plane can be correspondingly mapped back to the original environment.
\item Propose coverage performance metrics tailored for surfaces for a more realistic assessment of coverage quality. In the case of surface deformation, the approach involves differencing the disk data before and after the surface deformation. This difference yields deformation metrics, facilitating the refinement of coverage performance metrics.
\item On the transformed 2D disk, distributed control laws are designed for agents, propelling each agent towards the local optima within its subregion. The optimized deployment can then be mapped back to the original environment, enhancing global coverage performance. Simultaneously, the paths of agents on the disk are mapped back to the curved surface, eliminating the need for path planning on the original surface.
\end{enumerate}
The rest of this paper is organized as follows. Section~\ref{sec:pre} presents some preliminaries, Section~\ref{sec:hol} describes the conformal mapping algorithm and main results, Section~\ref{sec:sim} gives a demonstration of simulations, and Section~\ref{sec:con} summarizes the paper and discusses future work.

\textit{Notation :} Let $V_i$ denote the Voronoi cell, and ${H}$  is the performance function. In addition, $\left\| \cdot \right\|$ denotes Euclidean norm, and $u_i$ represents control inputs. $d_{\omega}$ characterizes the geodesic distance, and $\Phi (q)$ refers to the density value at point $q$. $\bigtriangleup h$ denotes the maximum slope difference, and $\partial Q$ represents the boundary of region $Q$. Moreover, $\mathbb{D}$ represents the unit disk, and ${E}$ represents the harmonic energy. $\mu$ is the Beltrami coefficient, and $k_{uv}$ represents the weight coefficient of edges. $C_{V_i}$ is the centroid of $V_i$, and $M_{V_i}$ is the mass of $V_i$. Further, $\mathbb{H}$ denotes the upper half of the complex plane.

\section{Preliminaries}\label{sec:pre}
This section provides relevant knowledge in multi-agent coverage control, the extension of performance functions to surfaces, and some theoretical aspects of surface parameterization.

\subsection{Coverage Performance Function}
For convex region $V$ embedded in $\mathbb{R}^{2}$, let $I_n=\left \{ {1,2,...,n} \right \} $ denote the set of $n$ agents, and $P=\left \{ p_{1} ,p_{2},...,p_{n}   \right \} $ represents their positions. The Voronoi cells generated by $p_i$ is given by
\begin{equation}\label{voro}
V_{i} =\left \{ q\in V|\left \| q-p_{i}  \right \|\le \left \| q-p_{j}  \right \| ,i\ne j,1\le j\le n  \right \} ,
\end{equation}
where $\left \|q-p_{i} \right \| $ represents the Euclidean distance between $q$ and $p_i$.
To achieve centroidal Voronoi tessellation for optimizing coverage performance, the following control law is constructed \cite{cortes2004coverage}.
\begin{equation}\label{control}
u_{i} =-k_{i} (p_i-C_{V_i}),
\end{equation}
where $C_{V_i}$ refers to the centroid of $V_i$.
For surfaces in 3D space, Euclidean distance is no longer applicable. Furthermore, due to the presence of slopes, the coverage cost of agents depends on multiple factors. Design the cost function for surface coverage as follows:
\begin{equation}\label{index}
H(P) =\sum_{i=1}^{n} \int_{V_i}^{} \lambda (d_{\omega } ,\zeta_{h}  )\phi  (q)dq,
\end{equation}
where $ \lambda (d_{\omega } ,\zeta_{h})$  characterizes the effect of distance and environment information on the cost. $d_{\omega (q,p_i)}$ denotes the geodetic distance from $p_i$ to $q$, and $\zeta_{h}$ represents the maximum height difference from $p_i$ to $q$, which characterizes the steepness of the surface slope, as shown in Fig.~\ref{slop}. In addition, $\phi(q)$ quantifies the probability distribution of random events.
Under the above formulation, it is a challenging task to determine the optimal deployment of agents due to the difficulty in identifying subtle changes on surfaces~\cite{wang2006brain}. Therefore, it is crucial to take the transformation from the 3D surface to the 2D plane.

\subsection{Mapping construction}
\begin{defn}
(Conformal Mapping \cite{schinzinger2012conformal}) Suppose that $Q$ is a zero-deficit surface embedded in $\mathbb{R}^{3}$ with boundary $\partial Q$ and that there exists a harmonic map $\varphi :(Q,g)\to \mathbb{D}\in R^{2} $, which transforms the surface $Q$ to a unit disk $\mathbb{D}$. Q is said to be conformal to $\mathbb{D}$ if there exists a positive scalar function $\lambda $  such that $\varphi d_{s _{Q} }  ^{2}=\lambda d_{s _{D} } ^{2}$. The terms $d_{s _{Q}}$ and $d_{s _{D} }$ represent the manifold metrics of $Q$ and $\mathbb{D}$, respectively.
\end{defn}

\begin{figure}[t!]
\centering
\includegraphics[width=3.5in]{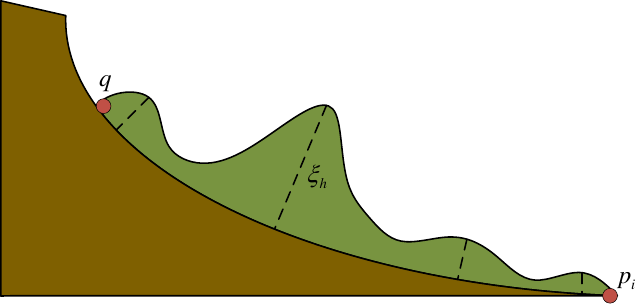}
\caption{\label{slop} Illustration on surface environments.}
\end{figure}
\begin{remark}
For a given Riemannian metric tensor, the harmonic energy of function $\varphi :Q\to \mathbb{D} $ can be defined as
\begin{equation*}
\label{deqn_ex6a}
E(\varphi )=\int_{Q}^{} \left | \bigtriangledown \varphi  \right | ^{2} d_{v_{Q} } .
\end{equation*}
This function can be employed to assess the smoothness of the mapping $\varphi $ with a critical point for harmonic mappings. 
For surfaces with zero genus, the conformal mapping is equivalent to the harmonic mapping. The disk parameterization, which constructs the harmonic mapping, can be obtained by solving the critical points of harmonic function. The construction of Laplace equation is given as follows:
\begin{align}
\begin{split}
\left \{
\begin{array}{ll}
    \bigtriangleup _{Q} \varphi(u)=0  \: \: \:\:\: if\:\: u\in Q\setminus \partial Q \\
    \varphi |_{\partial Q}=g .\\
\end{array}
\right.
\end{split}
\end{align}
After discretizing the environment, the set of points $\left\{v_i \right\} _{i=0}^{n-1}$ on the boundary can be mapped onto the unit circle $\mathbb{D}$  in proportion to the edge length:
\begin{equation*}\label{deqn_ex8a}
\varphi(v_i)=(\cos\theta_i ,\sin\theta _i).
\end{equation*}
Denote the side length of $\left[v_i,v_{i+1}\right] $ by $l_{\left [v_i,v_{i+1}\right]}$ and solve for $\theta_i$ according to the following equation
\begin{equation*}
\left\{
\begin{aligned}
& r:=\sum_{i=0}^{n-1}l_{\left[v_{i},v_{i+1}\right]} \\
& r_i:=\sum_{j=0}^{i-1}l_{\left[v_{j},v_{j+1}\right]} \\
& \theta_i:=2\pi\frac{r_i}{r}
\end{aligned}
\right.
\end{equation*}
where $r$ is the total edge length and $r_i$ is the edge length from the start point to the point $i$.
\end{remark}

Note that directly solving the above functions may induce conformal distortions due to boundary constraints, and in the next section we will discuss how to ameliorate such distortions.

\section{Main Results}\label{sec:hol}
This section includes rectifying distortions introduced by the initial mapping, applying the conformal mapping algorithm for post-correction, and addressing the transformations to a 2D plane.
\subsection{Distortion Correction}

Due to the constraints imposed by boundary conditions in the mapping, the inevitable conformal distortions arise, which leads to angular distortions. Such distortions can cause inaccuracies in shape and relative positions, which results in a significant decrease of overall coverage performance. Therefore, it is crucial to correct this conformal distortion.
Thus,  the conformal mapping is introduced as a strategy to reduce conformal distortion, and it is a unique type of mapping with properties between holomorphic mapping and general differentiable mapping. Intuitively, the conformal mapping can transform an infinitesimal circle into a bounded eccentric ellipse. Mathematically, the conformal mapping $f:\mathbb{C}\to \mathbb{C}$ 
satisfies the Beltrami equation below.
\begin{equation*}\label{deqn_ex10a}
\frac{\partial f}{\partial\bar{z}  }=\mu (z)\frac{\partial f}{\partial z}  ,
\end{equation*}
where $\mu(z)$ refers to the Beltrami coefficient of $f$, which measures the extent of conformal distortion, and $\bar{z}$ represents the conjugate of $z$.
As depicted in Fig.~\ref{contra}, the maximum enlargement $\arg(\mu (q))/2$, the minimum contraction angles $[\arg(\mu (q))-\pi ]/2$, and the scaling factor $(1\pm \left |\mu_{q}\right|)$ can be determined, and the maximum dilation of $f$ is given by
\begin{equation*}
\label{deqn_ex11a}
K(f)=\frac{1+\left\|\mu\right \|_{\infty}}{1-\left\|\mu \right\|_{\infty}}.
\end{equation*}
It is worth mentioning that the conformal mapping will degenerate to an all-pure mapping when $ K = 1$. 
In addition, the Beltrami coefficient reflects the bijection of the mapping, as explained below.
\begin{figure}[t!]
\centering
\includegraphics[height = 3.2cm,width=3.4in]{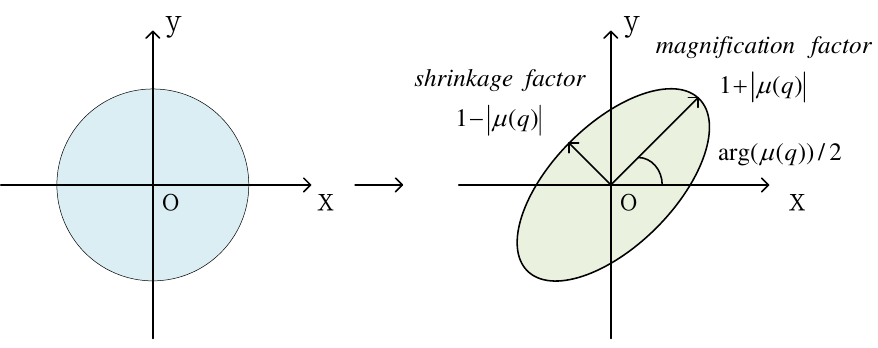}
\caption{\label{contra} Schematic diagram reflecting the degree of contraction.}
\end{figure}

\begin{lemma}
(Jacobian~\cite{choi2015fast}) For the conformal mapping $f$, the Jacobi matrix of $f$ is $J_{f}=\left | \frac{\partial f}{\partial z}  \right | ^{2} (1-\left | \mu _{f}  \right | ^{2} ) $, due to $\left \| \mu _{f}  \right \| _{\infty } < 1$, one obtains $\frac{\partial f}{\partial z} ^{2} \ne 0$ and $(1-\mu _{f} ^{2} )> 0$. Therefore, it follows that the Jacobian $J_f$ is positive everywhere.
\end{lemma}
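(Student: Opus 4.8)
The plan is to derive the stated factorization of the Jacobian directly from the Wirtinger calculus and then read off positivity from the two hypotheses on $\mu_f$. Writing $z = x + iy$, $f = u + iv$, and abbreviating $f_z := \partial f/\partial z$ and $f_{\bar z} := \partial f/\partial \bar z$, I would first record $f_z = \tfrac12(f_x - i f_y)$ and $f_{\bar z} = \tfrac12(f_x + i f_y)$, and recall that the Jacobian of the underlying real map $(x,y)\mapsto(u,v)$ is $J_f = u_x v_y - u_y v_x$.

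The first key step is the algebraic identity
\begin{equation*}
J_f = \left|\frac{\partial f}{\partial z}\right|^2 - \left|\frac{\partial f}{\partial \bar z}\right|^2 .
\end{equation*}
I would establish this by substituting the Wirtinger expressions into $|f_z|^2$ and $|f_{\bar z}|^2$, expanding everything in terms of $u_x, u_y, v_x, v_y$; the squared-modulus terms partially cancel, the mixed contributions collapse, and the difference reduces exactly to $u_x v_y - u_y v_x$. This is a routine but essential computation requiring only the definition of the two complex derivatives.

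The second step invokes the Beltrami equation $f_{\bar z} = \mu_f\, f_z$, which at once gives $|f_{\bar z}|^2 = |\mu_f|^2 |f_z|^2$. Inserting this into the identity above factors the Jacobian as
\begin{equation*}
J_f = \left|\frac{\partial f}{\partial z}\right|^2 \left(1 - |\mu_f|^2\right),
\end{equation*}
which is precisely the claimed form. Then, using the hypothesis $\|\mu_f\|_\infty < 1$, at every point one has $|\mu_f| \le \|\mu_f\|_\infty < 1$, so the factor $1 - |\mu_f|^2$ is strictly positive everywhere.

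The remaining, and genuinely delicate, point is the non-vanishing of $\partial f/\partial z$, which is what upgrades $J_f \ge 0$ to $J_f > 0$. I would argue this from the orientation-preserving homeomorphism property of $f$: if $f_z$ vanished at some point, the Beltrami equation would force $f_{\bar z}$ to vanish there as well, so the full differential of $f$ would become rank-deficient, contradicting that $f$ is a local diffeomorphism. Hence $|f_z|^2 > 0$, and combined with $1 - |\mu_f|^2 > 0$ we conclude $J_f > 0$ everywhere. I expect this last step to be the main obstacle, since it does not follow from the bound $\|\mu_f\|_\infty < 1$ alone but relies on the regularity and bijectivity of $f$ that are built implicitly into the construction of the mapping.
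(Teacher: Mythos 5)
Your proposal is correct in its first three steps, and it is actually more complete than what the paper offers: the paper never proves this lemma at all, but simply quotes it from the cited conformal-parameterization literature, with the entire argument compressed into the statement. Your route — the Wirtinger identity $J_f = \left|\partial f/\partial z\right|^2 - \left|\partial f/\partial \bar z\right|^2$, followed by substitution of the Beltrami relation $f_{\bar z} = \mu_f f_z$ to get the factorization, followed by $1-|\mu_f|^2>0$ from the sup-norm bound — is exactly the standard derivation underlying the cited result, so on that portion you and the paper coincide. Where you genuinely depart from the paper is on the non-vanishing of $\partial f/\partial z$, and your instinct there is right: contrary to what the paper's wording suggests (``due to $\left\|\mu_f\right\|_\infty < 1$, one obtains $\frac{\partial f}{\partial z}^2 \neq 0$''), this does \emph{not} follow from the Beltrami bound alone. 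A concrete witness is $f(z) = |z|^2 z$: it is a smooth orientation-preserving homeomorphism of the plane with $|\mu_f| \equiv 1/2$ wherever $\mu_f$ is defined, yet $f_z(0)=0$ and $J_f(0)=0$, so the Jacobian is not positive \emph{everywhere}. Some non-degeneracy input on $f$ beyond the dilatation bound is indispensable, and you correctly isolated this as the delicate point.

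The caveat concerns your fix for that step. You rule out $f_z = 0$ by assuming $f$ is a local diffeomorphism, but in the paper this lemma is introduced precisely to show that ``the Beltrami coefficient reflects the bijection of the mapping'' — that is, positivity of $J_f$ is the thing that is supposed to certify local invertibility. Taking local diffeomorphism as a hypothesis therefore makes the lemma nearly tautological and circular relative to its intended use (and note that mere homeomorphism, which is what the construction actually guarantees, is not enough, as the example above shows). The two honest ways to close this are either to weaken the conclusion to ``$J_f > 0$ almost everywhere,'' which is the correct statement for quasiconformal homeomorphisms in the continuous theory, or to pass to the discrete setting of the cited reference, where the map is piecewise linear and orientation-preserving on each triangle, so the Jacobian is a positive constant on every face. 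Either repair is compatible with your steps one through three, which are rigorous as written.
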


Considering the process of constrcuting conformal mappings, it can be reconstructed to correct conformal aberrations by the Beltrami coefficients, as explained below.
\begin{lemma}\label{lem:comp}
(Composite Mapping \cite{choi2021efficient}) For the conformal mappings $f: M_{1} \to M_{2} $ and $\varphi: M_{2} \to M_{3} $, if $\mu _{f^{-1} } $ is equal to $\mu _{\varphi  }$, the Beltrami coefficient of $\varphi\circ f$ is equal to $0$, i.e., $\varphi\circ f$  is a conformal mapping.
\end{lemma}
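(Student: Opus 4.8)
The plan is to compute the Beltrami coefficient of $\varphi\circ f$ directly from the chain rule and to show that the hypothesis $\mu_{f^{-1}}=\mu_{\varphi}$ forces it to vanish identically. Writing $w=f(z)$ and abbreviating $f_{z}:=\partial f/\partial z$, $f_{\bar z}:=\partial f/\partial\bar z$, the chain rule for Wirtinger derivatives gives $(\varphi\circ f)_{\bar z}=(\varphi_{w}\circ f)f_{\bar z}+(\varphi_{\bar w}\circ f)\overline{f_{z}}$ and $(\varphi\circ f)_{z}=(\varphi_{w}\circ f)f_{z}+(\varphi_{\bar w}\circ f)\overline{f_{\bar z}}$. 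Dividing the two and using $f_{\bar z}=\mu_{f}f_{z}$, I would obtain the \emph{composition formula}
\begin{equation*}
\mu_{\varphi\circ f}=\frac{\mu_{f}+(\mu_{\varphi}\circ f)\,\theta}{1+(\mu_{\varphi}\circ f)\,\overline{\mu_{f}}\,\theta},\qquad \theta:=\frac{\overline{f_{z}}}{f_{z}},
\end{equation*}
whose decisive structural feature is that the phase factor $\theta$ is \emph{unimodular}, $|\theta|=1$.

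Next I would express $\mu_{f^{-1}}$ in the same variables. Applying the composition formula to the identity $f^{-1}\circ f=\mathrm{id}$, whose Beltrami coefficient is identically zero, forces the numerator $\mu_{f}+(\mu_{f^{-1}}\circ f)\,\theta$ to vanish, and hence, using $\theta^{-1}=\overline{\theta}$,
\begin{equation*}
(\mu_{f^{-1}}\circ f)(z)=-\mu_{f}(z)\,\overline{\theta}.
\end{equation*}
Since $\mu_{f^{-1}}$ and $\mu_{\varphi}$ are both functions on $M_{2}$, the hypothesis $\mu_{f^{-1}}=\mu_{\varphi}$ evaluated at $w=f(z)$ yields $(\mu_{\varphi}\circ f)(z)=-\mu_{f}(z)\,\overline{\theta}$.

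Finally I would substitute this identity into the composition formula. The numerator becomes $\mu_{f}-\mu_{f}\,\overline{\theta}\,\theta=\mu_{f}\bigl(1-|\theta|^{2}\bigr)=0$, while the denominator reduces to $1-|\mu_{f}|^{2}$, which is strictly positive because $\|\mu_{f}\|_{\infty}<1$ (the same bound invoked in the Jacobian lemma). Therefore $\mu_{\varphi\circ f}\equiv0$, and $\varphi\circ f$ is conformal. I expect the only genuine obstacle to be bookkeeping rather than substance: one must track the unimodular phase $\theta$ carefully and keep straight the points at which each Beltrami coefficient is evaluated ($z$ versus $w=f(z)$), since the whole argument collapses to the single cancellation $1-|\theta|^{2}=0$ once these are correctly aligned.
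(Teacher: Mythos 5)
Your proof is correct, but note that the paper never proves this lemma at all: it is stated as a quoted result from the reference \cite{choi2021efficient}, so there is no in-paper argument to compare against. What you have written is the standard self-contained derivation from the quasiconformal-mapping literature, and it supplies exactly what the paper outsources to the citation: the composition formula for Beltrami coefficients via the Wirtinger chain rule, the inverse formula $(\mu_{f^{-1}}\circ f)(z)=-\mu_{f}(z)\,\overline{\theta}$ obtained by applying that formula to $f^{-1}\circ f=\mathrm{id}$, and the final cancellation driven by $|\theta|=1$. Your computations check out (the chain-rule identities, the reduction of the denominator to $1-|\mu_{f}|^{2}$, and the vanishing numerator are all right). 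Two points are worth making explicit if this were inserted as an actual proof: (i) the denominator $1+(\mu_{\varphi}\circ f)\,\overline{\mu_{f}}\,\theta$ of the composition formula never vanishes, since $\left|(\mu_{\varphi}\circ f)\,\overline{\mu_{f}}\,\theta\right|\le\|\mu_{\varphi}\|_{\infty}\|\mu_{f}\|_{\infty}<1$, so the quotient defining $\mu_{\varphi\circ f}$ is legitimate pointwise, not just formally; and (ii) the maps $f$ and $\varphi$ in the lemma must be read as quasiconformal homeomorphisms rather than conformal maps in the strict sense --- otherwise $\mu_{f}$ and $\mu_{\varphi}$ would already be identically zero and the statement would be vacuous --- which is precisely the generality your computation handles, including the fact that $\theta(z)=\overline{f_{z}}/f_{z}$ is a $z$-dependent unimodular function and not a constant.
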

We previously mentioned that Beltrami coefficients $\mu (p)$ are used to measure the conformal distortion of maps.
\begin{lemma}
(Distortion Evaluation \cite{halier2000nondistorting}) The mapping $f$ is conformal in a small neighborhood around point $p$ if and only if $\mu (p)=0$.
\end{lemma}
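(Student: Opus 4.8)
The plan is to reduce the geometric notion of conformality to the Cauchy--Riemann condition written in Wirtinger derivatives, and then to read off the equivalence with $\mu(p)=0$ directly from the Beltrami equation together with the preceding Jacobian lemma. Throughout, ``conformal in a small neighborhood around $p$'' is to be understood in the infinitesimal sense illustrated in Fig.~\ref{contra}: the linearization $df(p)$ sends infinitesimally small circles centered at $p$ to circles rather than to genuinely eccentric ellipses, which is exactly the property of preserving angles at $p$.

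First I would write the differential of $f$ at $p$ in complex form as $df=\frac{\partial f}{\partial z}\,dz+\frac{\partial f}{\partial\bar z}\,d\bar z$ and examine the image of an infinitesimal circle $dz=\varepsilon e^{i\theta}$, $\theta\in[0,2\pi)$. Writing $\frac{\partial f}{\partial z}$ and $\frac{\partial f}{\partial\bar z}$ in polar form, the image $\frac{\partial f}{\partial z}\varepsilon e^{i\theta}+\frac{\partial f}{\partial\bar z}\varepsilon e^{-i\theta}$ is a superposition of two counter-rotating phasors, so as $\theta$ varies it traces an ellipse whose semi-axes are $\varepsilon\left(\left|\frac{\partial f}{\partial z}\right|+\left|\frac{\partial f}{\partial\bar z}\right|\right)$ and $\varepsilon\left(\left|\frac{\partial f}{\partial z}\right|-\left|\frac{\partial f}{\partial\bar z}\right|\right)$; since $\|\mu\|_{\infty}<1$ forces $\left|\frac{\partial f}{\partial\bar z}\right|<\left|\frac{\partial f}{\partial z}\right|$, these reproduce the scaling factors $(1\pm\left|\mu_{q}\right|)$ recorded beneath Fig.~\ref{contra}. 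This ellipse degenerates to a circle --- so that $df(p)$ is a similarity and $f$ is conformal at $p$ --- if and only if the two semi-axes coincide, i.e. if and only if $\frac{\partial f}{\partial\bar z}(p)=0$, which is precisely the Cauchy--Riemann condition in Wirtinger form.

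Next I would close the loop through the Beltrami equation $\frac{\partial f}{\partial\bar z}=\mu(z)\frac{\partial f}{\partial z}$. For the ``if'' direction, assuming $\mu(p)=0$ gives $\frac{\partial f}{\partial\bar z}(p)=\mu(p)\frac{\partial f}{\partial z}(p)=0$, so by the previous step $f$ is conformal at $p$. For the ``only if'' direction, conformality of $f$ at $p$ means $\frac{\partial f}{\partial\bar z}(p)=0$; since the preceding Jacobian lemma guarantees $\frac{\partial f}{\partial z}(p)\neq 0$ (otherwise the Jacobian $J_{f}$ could not be positive everywhere), dividing the Beltrami relation by $\frac{\partial f}{\partial z}(p)$ yields $\mu(p)=0$. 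Taken together, the two implications establish the stated equivalence.

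I expect the only genuine subtlety to lie in the interpretation of ``small neighborhood.'' Read literally as conformality on an open set, the equivalence would require $\mu\equiv 0$ there, which the single pointwise condition $\mu(p)=0$ cannot by itself guarantee; I would therefore state explicitly that the claim concerns the infinitesimal (tangent-space) behavior encoded by $df(p)$ and depicted in Fig.~\ref{contra}, for which $\mu(p)=0$ is exactly the right condition. With that reading fixed, the remaining ingredients --- the elementary ellipse computation and the use of $\|\mu\|_{\infty}<1$ to keep $\frac{\partial f}{\partial z}$ from vanishing --- are routine.
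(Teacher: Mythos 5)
The paper offers no proof of this lemma at all: it is imported verbatim from the cited reference \cite{halier2000nondistorting}, alongside the Jacobian and composite-mapping lemmas, as part of the background machinery. Your proposal therefore cannot coincide with the paper's argument; instead it supplies a complete, self-contained derivation where the paper only cites. Your argument is correct and is the standard one: the ellipse computation for $df = \frac{\partial f}{\partial z}\,dz + \frac{\partial f}{\partial \bar z}\,d\bar z$ acting on $dz = \varepsilon e^{i\theta}$ (two counter-rotating phasors giving semi-axes $\varepsilon\bigl(\left|\frac{\partial f}{\partial z}\right| \pm \left|\frac{\partial f}{\partial \bar z}\right|\bigr)$, consistent with the factors $(1\pm|\mu_q|)$ in Fig.~\ref{contra}), the observation that the ellipse is a circle precisely when $\frac{\partial f}{\partial \bar z}(p)=0$, and the translation to $\mu(p)=0$ via the Beltrami equation, using $\frac{\partial f}{\partial z}(p)\neq 0$ from the Jacobian lemma so that the division is legitimate. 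Your closing caveat is also well taken and worth making explicit: read literally, ``conformal in a small neighborhood'' would demand $\mu\equiv 0$ on that neighborhood, and the pointwise hypothesis $\mu(p)=0$ only controls the linearization $df(p)$; the lemma is true exactly in the infinitesimal (angle-preservation-at-$p$) sense you adopt, which is also the sense in which the paper uses it when measuring distortion. In short: correct, and it fills a gap the paper leaves to the literature.
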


Building upon the above results, a one-to-one conformal mapping $g:\mathbb {D}\to \mathbb {D}$ can be introduced to rectify the conformal distortion of $\varphi$ that we previously constructed. Moreover, if the Beltrami coefficient of $\varphi$ is $\mu _{ \varphi }$, and another conformal mapping $g:\mathbb D\to \mathbb D$ with the same Beltrami coefficient is computed, their composite mapping is conformal according to Lemma~\ref{lem:comp}, effectively correcting the distortion. It is intuitive to reframe the issue for the conformal mapping $g$.
\begin{remark}
The linear Beltrami solver LBS is proposed to quickly solve the corresponding conformal factorization sum based on the Beltrami coefficients~\cite{lui2013texture}. For simplicity, we denote by LBS($\mu$) the conformal mapping associated with $\mu$.
\end{remark}
It becomes a nonlinear problem to find an optimal boundary $g|_{\partial \mathbb{D}}$ for minimizing conformal distortion. To simplify computation, we adopt the strategy of mapping the unit disk to the upper half-plane using the Cayley transform \cite{havu2007cayley}. Mathematically, the Cayley transform $Y:\mathbb{D}\to \mathbb{H}=\left \{ x+iy|y\ge 0;x,y\in R \right \} $  is defined as:
\begin{equation}\label{deqn_ex13a}
Y(z)=\frac{i+iz}{i-iz}.
\end{equation}
The problem then transforms into solving a conformal mapping $f:\mathbb{H}\to \mathbb{H}$ with Beltrami coefficient $\mu _{(Y\circ \varphi )}$. According to Lemma~\ref{lem:comp}, the composite mapping $f\circ Y\circ \varphi$ is conformal. It's worth noting that under the Cayley transform, the boundary of the disk is mapped to the real axis $y = 0$.
After discretizing the environment with a triangular mesh, the initial mapping $\varphi$ maps $Q$ to the triangular mesh $\Omega$ of the disk $\mathbb{D}$, and $Y$ maps $\mathbb{D}$ to a large triangle with vertices $\left[ p_{1}, p_{2}, p_{3} \right]$. Next, we solve the system of equations with these three points.
\begin{equation}
\left\{
\begin{aligned}
&f=LBS(\mu_{ (Y\circ \varphi )^{-1}} )   \\
&f(Y(p_{i} ))=Y(p_{i} )\:\:\:\:\:i=1,2,3  \\
&Im(f(Y(z)))=0\:\:\:\: z\in \partial \mathbb{D}.
\end{aligned}
\right.
\end{equation}
After solving the above system of equations, we map the points in the upper half-plane back to the unit disk using the inverse of the Cayley mapping $Y^{-1}(z)=\frac{z-i}{z+i}$. Finally, by integrating these mappings, we obtain a conformal mapping $g:=Y^{-1} \circ f\circ Y\circ \varphi$.
\subsection{Algorithm for Disk Conformal Mapping}
This subsection describes the implementation of disk conformal mapping algorithm.
The first step is the construction of the initial mapping, which involves discretizing the environment $Q$ into a triangular lattice $K$. Let $\left[u, v\right]$ denote the edges connecting the two vertices $u$ and $v$, at which point the potential energy of $\varphi :Q\to \mathbb{D}$ is given by
\begin{equation*}
\label{deqn_ex16a}
E(\varphi )=\sum_{[u,v]\in K} k_{uv} \left \|\varphi (u)-\varphi(v)\right\|^{2},
\end{equation*}
with $k_{uv}=\cot\alpha +\cot\beta$. Here $\alpha$ and $\beta$ are the angles opposite side $[u,v]$, as shown in Fig.~\ref{fig:wei}. The Laplace operator can be discretized as
\begin{equation}
\label{deqn_ex17a}
\Delta _{Q} \varphi (v_{i})=\sum_{v_{j}\in N(V_{i} )} k_{v_{i}v_{j}} [\varphi (v_{j} )-\varphi (v_i)],
\end{equation}
where ${v_{j}\in N(v_{i})=\left \{ v_j|k_{v_iv_j}\ne 0 \right \}}$, $N(v_{i})$ is the set of neighbors of vertex $v_i$, thus transforming (4) into a sparse linear system.
\begin{figure}[t!]
\centering
\includegraphics[width=0.5\linewidth]{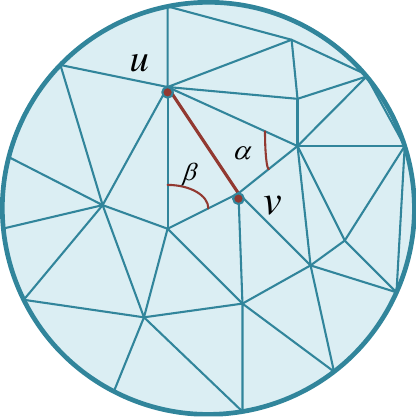}
\caption{\label{fig:wei} Schematic representation of weighting factors $k_{uv}$.}
\end{figure}
Let $\varphi =(u+iv):T_{1} \to T_{2}$ be a directionally preserving homeomorphic mapping between two planar triangular meshes. To calculate the Beltrami coefficient of $\varphi$, we approximate the partial derivative on each triangular surface $S_1$ on $T_1$. Suppose that the face $S_1$ on $T_1$ is transformed by mapping to $S_1$ on $T_2$. The approximation of $\mu _{\varphi } $ on $S_1$ can be computed using the coordinates of six vertices on these two faces. By setting $S_1=[a_1+ib_1,a_2+ib_2,a_3+ib_3]$ and $S_2=[w_1,w_2,w_3]$, we approximate the Beltrami coefficients on $S_1$ as
\begin{equation*}\label{deqn_ex18a}
\mu _{\varphi (S_{1} )}=\frac{\frac{1}{2} (F_{x}+iF_{y} )\begin{pmatrix}w_1
 \\w_2
 \\w_3\\
\end{pmatrix} }{\frac{1}{2} (F_{x}-iF_{y} )\begin{pmatrix}w_1
 \\w_2
 \\w_3\\
\end{pmatrix}}
\end{equation*}
with
\begin{equation*}\label{deqn_ex19a}
F_{x} =\frac{1}{2Area(S_1)} \begin{pmatrix}b_3-b_2
 \\b_1-b_3
 \\b_2-b_1
\end{pmatrix}^{T}, \quad F_{y} =\frac{1}{2Area(S_1)} \begin{pmatrix}a_3-a_2
 \\a_1-a_3
 \\a_2-a_1
\end{pmatrix}^{T}.
\end{equation*}
The process of discretization using LBS is briefly outlined below. Let $S=[v_i,v_j,v_k]$ represent a triangular face, where $v_i$ denotes vertex $i$, and $j,k\in N_{i}$, with $N_{i}$ being the set of neighbors of vertex $i$. Let $u_t=\varphi(v_t)$ for $t=i,j,k$. By assuming $v_{j} =g_{j} +ih_{j}$ and $u_{j} =s_{j} +it_{j}$, and expressing the Beltrami coefficients of $S$ as $\mu _{\varphi }(S) =\rho _{S} +i\eta _{S}$, the LBS obtains the discrete linear equation. The desired mapping function can be obtained by solving these linear equations. The detailed implementation is presented in Algorithm~\ref{alg:alg1}.

\begin{algorithm}[t!]
\caption{\label{alg:alg1} Conformal Mapping Construction Algorithm}
\hspace*{0.02in} {\bf Input:} The triangularized lattice $K$ of $Q$  \\ 
\hspace*{0.02in} {\bf Output:} $g: K\to\mathbb{D}$
\begin{algorithmic}[1]     
\State \textbf{for} i= 1 : N \textbf{do}
       \State Calculate $l_{[v_i, v_{i+1}]}$
       \State Update $\varphi:K\to\mathbb{D}$ with (4)
\State \textbf{end}
\State Linearization by (5)
\State Solve $f=LBS(\mu_{(Y\circ\varphi)^{-1}})$ with $\mu_{(Y\circ\varphi)^{-1}}$
\State Map to a disk by $Y^{-1}$
\State Finalize $g:=Y^{-1} \circ f\circ Y\circ\varphi$
\end{algorithmic}
\end{algorithm}

\subsection{Coverage Optimization}
Let $P=[p_{1},p_2,...,p_n ]$ denote the initial position of MAS in the region $Q$, and let $f:Q\to \mathbb{D}$ be the proposed harmonic mapping. Based on the position of MAS using the partition rule in Eq.~(\ref{voro}) for $\mathbb{D}$, for each subregion $V_i$, the center of mass $C_{V_i}$ and mass $M_{V_i}$ are given by
\begin{equation*}\label{deqn_ex20a}
 C_{V_{i} } =\frac{\int_{V_i}^{} q\phi   (q)dq}{\int_{V_i}^{} \phi   (q)dq} ,\:\:\: M_{V_{i} } ={\int_{V_i}^{} \phi  (q)dq} .
\end{equation*}
In accordance with Eq.~(\ref{control}), we derive the control law for guiding agents toward the center of mass within the convex region. On the mapped 2D disk, $\lambda (d_{\omega } ,\zeta_{h}  )$ in (\ref{index}) becomes $\left \| f(q)-f(p_i) \right \|^{2}$.
\begin{figure*}[t!]
\centering
\subfloat[\label{fig:a}Original surface]{\includegraphics[width=3.4in]{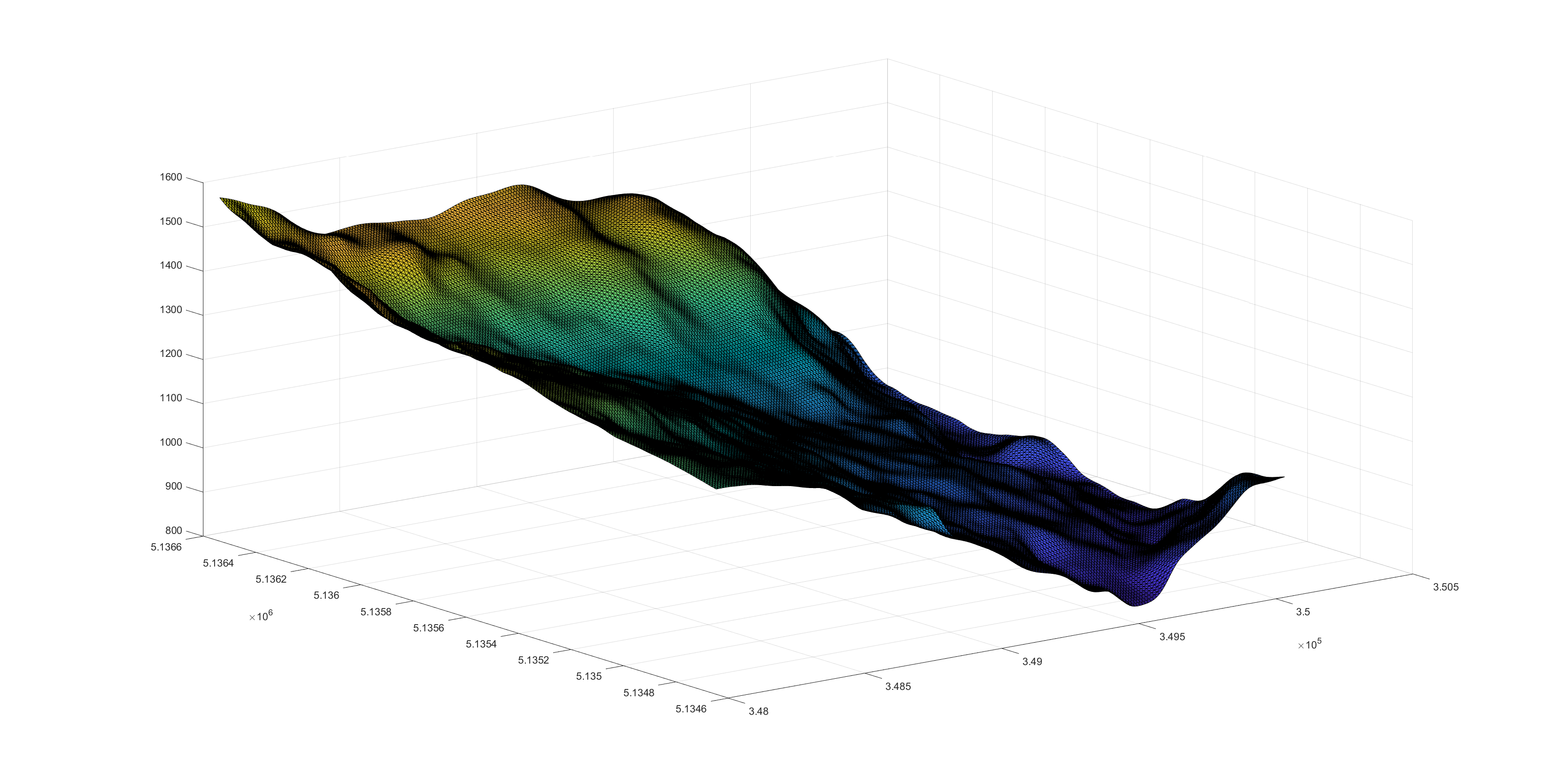}}
\hfil
\subfloat[\label{fig:b}Deformed surface]{\includegraphics[width=3.4in]{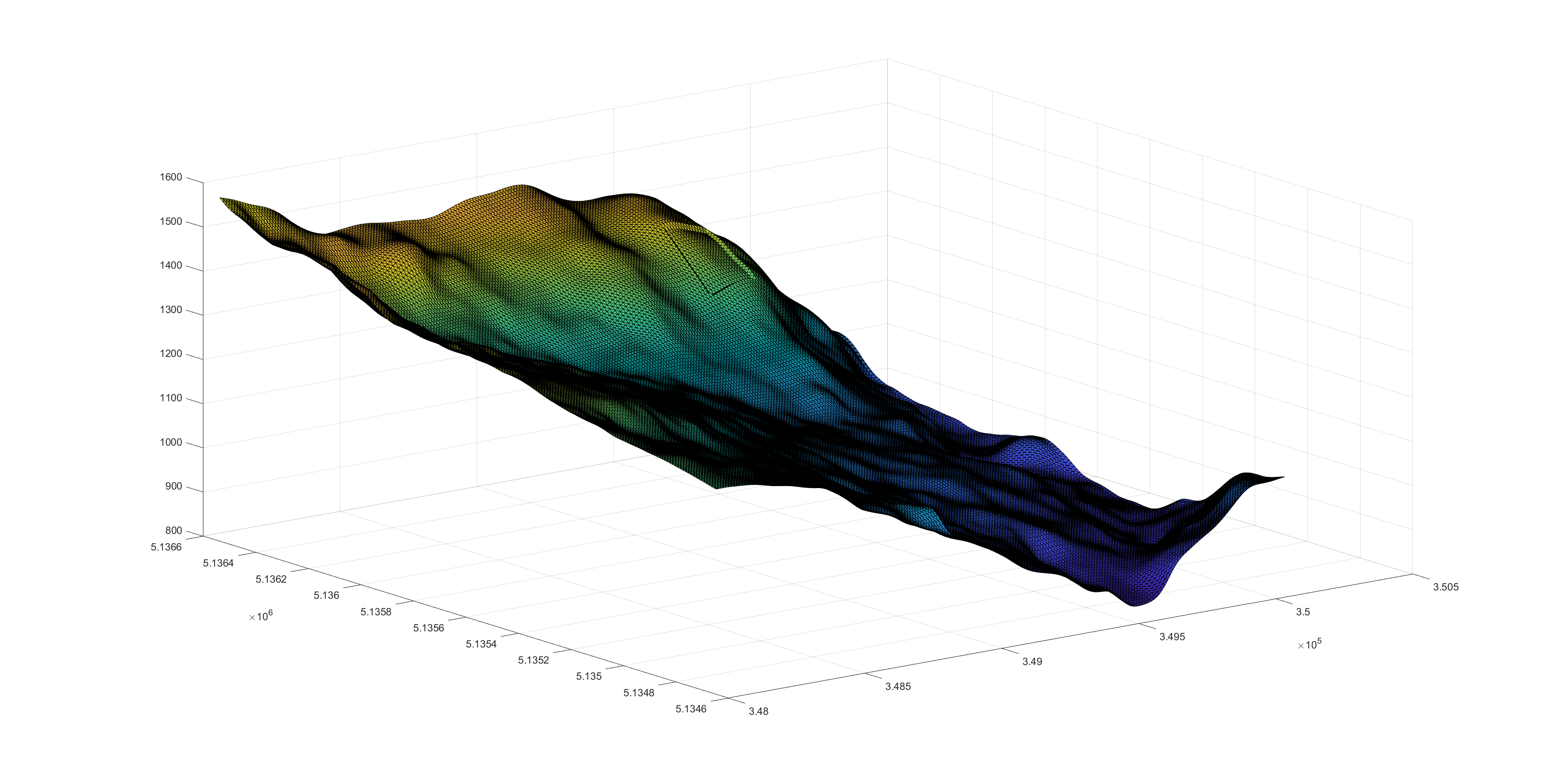}}
\hfil
\subfloat[\label{fig:a}Initial mapping disc]{\includegraphics[width=3.4in]{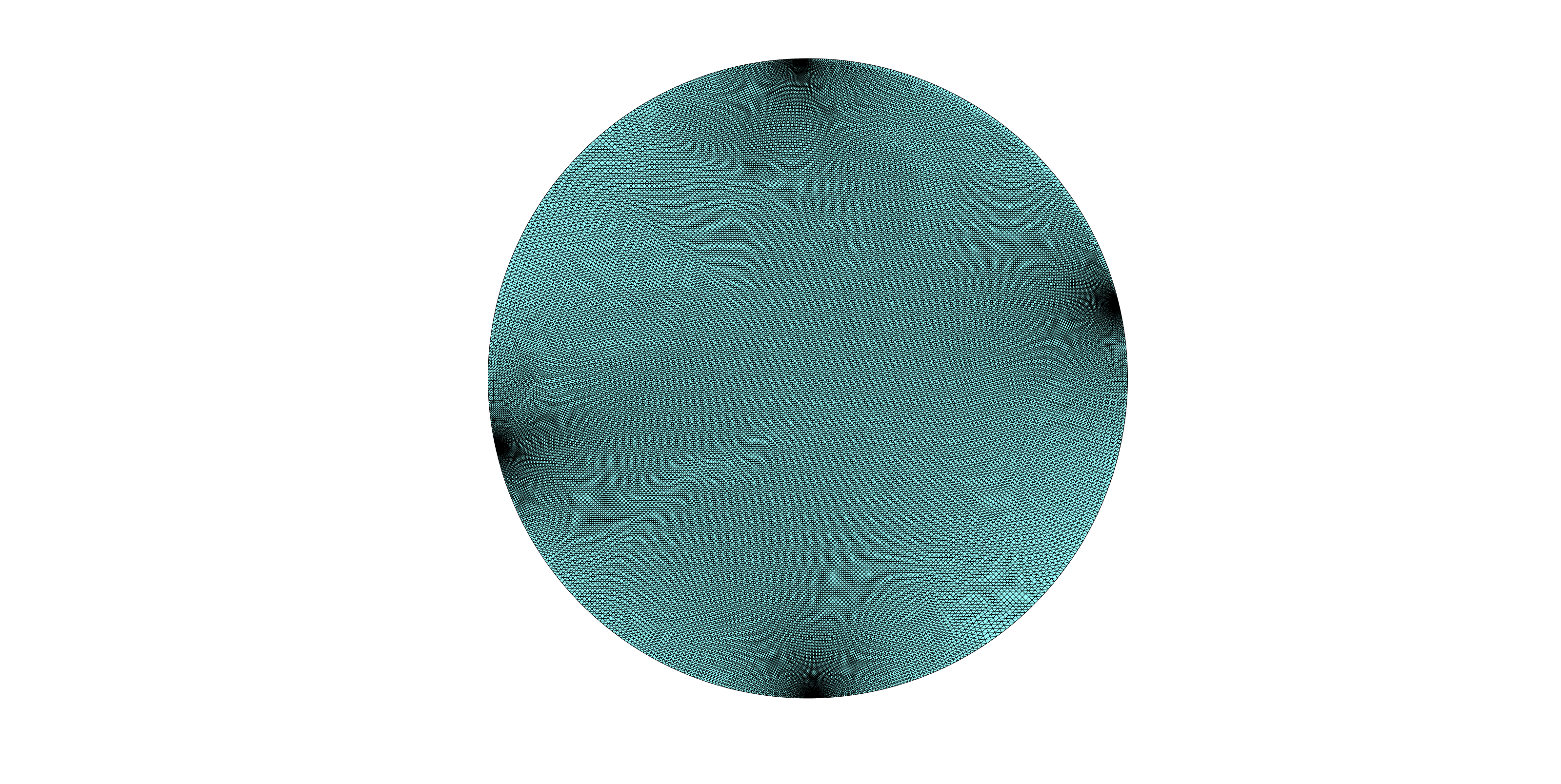}}
\hfil
\subfloat[\label{fig:b}Mapping disc after deformation]{\includegraphics[width=3.4in]{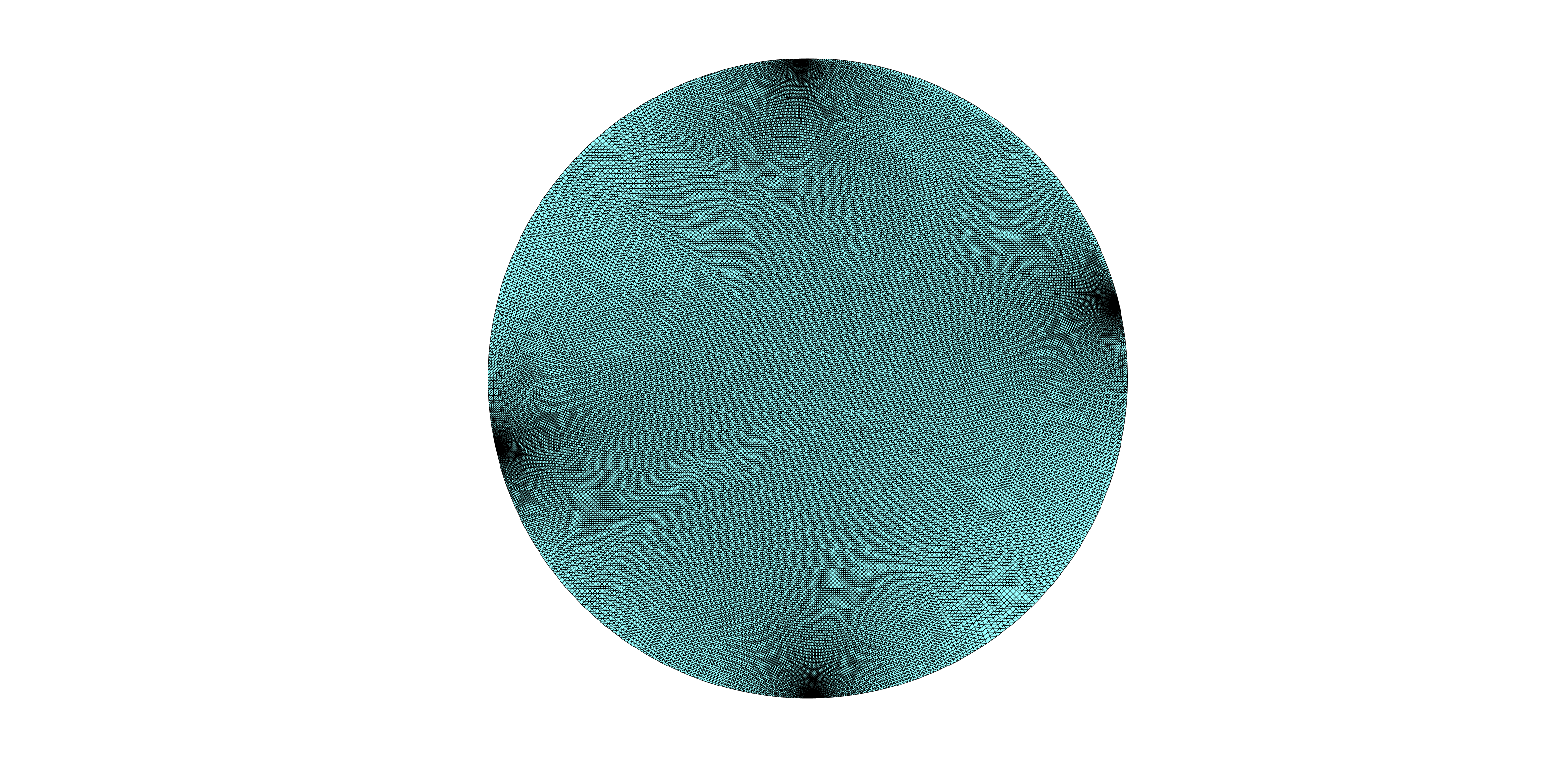}}
\caption{\label{fig:mul}Comparison between original and deformed surfaces.}
\label{fig_sim}
\end{figure*}
\begin{theorem}
The control law
\begin{equation}\label{u_con}
u_i =-k_iJ_{f}^{-1} (f(p_i))(p_i-f^{-1}(C_{V_i}) )
\end{equation}
solves the coverage problem
\begin{equation}\label{min}
\int_{Q}\min_{i\in I_n}\left \| f(q)-f(p_i) \right \|^{2}  \phi  (q)dq,
\end{equation}
where $J_f=\frac{\partial f}{\partial q} $ represents the Jacobian of $f$. 
\end{theorem}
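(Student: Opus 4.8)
The plan is to use the coverage cost $H$ itself as a Lyapunov function and show that (\ref{u_con}) renders it nonincreasing, so that LaSalle's invariance principle drives the closed loop onto the set of centroidal configurations, which are precisely the stationary points of (\ref{min}). First I would split the objective: for each $q$ the minimum $\min_{i}\|f(q)-f(p_i)\|^2$ is attained exactly on the cell $V_i$ of the Voronoi partition induced by the transported metric, so
\begin{equation*}
H(P)=\sum_{i=1}^{n}\int_{V_i}\|f(q)-f(p_i)\|^2\phi(q)\,dq .
\end{equation*}
This exhibits (\ref{min}) as a centroidal Voronoi tessellation problem posed in the image coordinates $\xi_i:=f(p_i)$ on the disk, so that the Cort\'es framework behind (\ref{control}) applies once the change of variables is carried through.

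The technical core is the gradient $\partial H/\partial p_i$. I would differentiate under the integral sign and argue, exactly as in the classical planar proof, that the contributions generated by the motion of the cell boundaries cancel: across any shared face the integrand is continuous and the two adjacent cells contribute with opposite orientation, so only the explicit dependence of the integrand on $p_i$ survives. The chain rule applied to $\|f(q)-f(p_i)\|^2$ then introduces the Jacobian, giving
\begin{equation*}
\frac{\partial H}{\partial p_i}=-2\,J_f(p_i)^{\!\top}\!\int_{V_i}\!\bigl(f(q)-f(p_i)\bigr)\phi(q)\,dq=2\,M_{V_i}\,J_f(p_i)^{\!\top}\bigl(f(p_i)-C_{V_i}\bigr),
\end{equation*}
with $C_{V_i}$ the mass centroid of $V_i$ computed in the disk coordinates. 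This already identifies the stationary points as the configurations satisfying $f(p_i)=C_{V_i}$, i.e. $p_i=f^{-1}(C_{V_i})$, which are exactly the equilibria $u_i=0$ of (\ref{u_con}).

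Next I would verify the descent property. The cleanest route is to read (\ref{u_con}) as the pull-back to $Q$ of the standard centroidal flow $\dot\xi_i=-k_i(\xi_i-C_{V_i})$ in the image: since $\dot\xi_i=J_f(p_i)u_i$, multiplying the image-space gradient by $J_f^{-1}(f(p_i))$ converts it into a velocity command on the surface, and the linearization $f(p_i)-C_{V_i}=J_f(p_i)\bigl(p_i-f^{-1}(C_{V_i})\bigr)+o(\,\cdot\,)$ identifies this with the form written in (\ref{u_con}). The Jacobian lemma guarantees $J_f>0$, hence $J_f^{-1}(f(p_i))$ is well defined and bounded, so the feedback is implementable; substituting into $\dot H$ collapses each summand to $-2k_iM_{V_i}\|f(p_i)-C_{V_i}\|^2\le 0$, with equality only where $u_i=0$.

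Finally I would invoke LaSalle's invariance principle on the compact invariant set of admissible configurations: trajectories converge to the largest invariant subset of $\{\dot H=0\}$, which by the gradient expression is exactly $\{f(p_i)=C_{V_i}\ \forall i\}$, the centroidal Voronoi configuration solving (\ref{min}). I expect the main obstacle to be the gradient step under the $f$-dependent partition: rigorously justifying the cancellation of the moving-boundary terms and tracking the change of variables so that the Jacobian appears in precisely the conjugate pairing ($J_f^{\!\top}$ in the gradient against $J_f^{-1}$ in the feedback) needed for the quadratic form to be sign definite. This is exactly where the positive-Jacobian lemma, together with $\|\mu_f\|_\infty<1$, does the essential work, ensuring $J_f$ is invertible and nondegenerate so that the preconditioned law (\ref{u_con}) is both well posed and genuinely dissipative.
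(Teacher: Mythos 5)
Your plan is a genuinely different route from the paper's. The paper's own proof never touches Lyapunov analysis on $Q$: it applies the change-of-variables formula to identify the cost (\ref{min}) with an integral over $\mathbb{D}$ (Eq.~(\ref{trans})), invokes the classical planar centroidal law $u_i=-k_i(f(p_i)-C_{V_i})$ on the disk, and then appeals to bijectivity of $f$ to assert that the disk trajectories have unique smooth preimages in $Q$. You instead re-derive the Cort\'es machinery directly in surface coordinates: moving-boundary cancellation, the chain-rule gradient $\partial H/\partial p_i = 2M_{V_i}J_f(p_i)^{\top}(f(p_i)-C_{V_i})$, identification of equilibria with $p_i=f^{-1}(C_{V_i})$, and LaSalle. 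That outline is more self-contained and more informative than the paper's argument, and the gradient computation and equilibrium identification are right (modulo a $|\det J_f|$ weight that both you and the paper drop when identifying the disk centroid with the pushforward of the $Q$-integrals).

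However, there is a genuine gap in your dissipation step. Substituting the law (\ref{u_con}) as actually stated into $\dot H$, the factors $J_f(p_i)^{\top}$ and $J_f^{-1}(f(p_i))$ do not cancel into a squared norm; what survives is the cross term
\begin{equation*}
\dot H \;=\; -2\sum_{i=1}^{n} k_i M_{V_i}\,\bigl(f(p_i)-C_{V_i}\bigr)^{\top}\bigl(p_i - f^{-1}(C_{V_i})\bigr),
\end{equation*}
not $-2\sum_i k_iM_{V_i}\|f(p_i)-C_{V_i}\|^{2}$. The exact collapse you claim occurs only for the pullback of the image-space flow, i.e.\ for $u_i=-k_iJ_f^{-1}(f(p_i))\bigl(f(p_i)-C_{V_i}\bigr)$, whose argument is the \emph{image} difference rather than the \emph{preimage} difference appearing in (\ref{u_con}); the two agree only to first order, which is exactly the $o(\cdot)$ you wave at. For (\ref{u_con}) itself the sign of the cross term needs a separate argument: by your own linearization it is, to leading order, the quadratic form of $[J_f(p_i)]^{-1}$, and positivity of that form is \emph{not} what the Jacobian lemma provides. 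The lemma gives $\det J_f>0$ (invertibility, orientation preservation); but for a conformal map $J_f$ is locally a scaled rotation $\rho R_{\theta}$, so the symmetric part of $J_f^{-1}$ is $\rho^{-1}\cos\theta\, I$, which can fail to be positive when $|\theta|\ge \pi/2$ even though $\det J_f=\rho^{2}>0$. Consequently the claimed inequality $\dot H\le 0$ does not follow as stated, and your LaSalle conclusion is at best local. To close the argument you would need either to prove the theorem for the image-difference form of the law and then relate it to (\ref{u_con}), or to establish a monotonicity condition guaranteeing $(f(p_i)-C_{V_i})^{\top}(p_i-f^{-1}(C_{V_i}))\ge 0$ along trajectories --- neither of which the positive-Jacobian lemma supplies.
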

\begin{proof}
By the transformation of variables, if $\Omega \subset R^n$ is an open set, $g:\Omega \to \mathbb{R}$ is integrable over $\Omega$, and if $\psi :\tilde{\Omega } \to \Omega $ is a differential homomorphism, then $(g\circ \psi )\left | detJ_\psi  \right | $ is integrable over $\tilde{\Omega }$, and the following equation holds
\begin{equation*}\label{deqn_ex23a}
\int_{\Omega }^{} g(r)dr=\int_{\tilde{\Omega }}^{}  (g\circ \psi )(\tilde{r} )\left | detJ_{\psi } (\tilde{r} ) \right |d\tilde{r},
\end{equation*}
where $| detJ_{\psi } (\tilde{r} ) |$ is the determinant of the Jacobi matrix of $\psi$ computed on point $\tilde{r}$.
Since $Q$ is compact and the integral in (\ref{min}) is a composition of continuous functions that are integrable over $Q$, we get with $\varsigma_i=f(p_i)$
\begin{equation}\label{trans}
\begin{split}
&~~~~\int_{\mathbb{D}}\min_{i\in I_n}\left\| w-\varsigma_{i} \right\| ^2\phi (f^{-1}(w))dw\\
&=\int_{Q}\min_{i\in I_n}\left \| f(q)-f(p_i) \right\|^2\phi(q)dq,
\end{split}
\end{equation}
where $w=f(q)$, $q\in Q$. Thus, we derive the control law for the suboptimal deployment on $\mathbb{D}$
\begin{equation*}\label{deqn_ex24a}
 u_i =-k_i(f(p_i)-C_{V_i}),
\end{equation*}
where $C_{V_i}$ represents the center of mass of the Voronoi cell induced by $f(p_i)$. Since $f$ is bijective, the path generated by the control law (\ref{trans}) has a unique counterpart in the original region $Q$ with a smooth preimage. This completes the proof.
\end{proof}

\begin{coro}\label{cor1}
If the mapping $f:V\to \mathbb{D}$ is conformal and bijective, the continuous Voronoi cells in $\mathbb{D}$ map back to the original environment as a continuous set.
\end{coro}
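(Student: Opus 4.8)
The plan is to read ``continuous set'' as topological connectedness (path-connectedness) and to reduce the claim to the elementary fact that a homeomorphism carries connected sets to connected sets. The only genuine content is therefore to certify that $f^{-1}$ is continuous, i.e.\ that the bijective conformal map $f$ is in fact a homeomorphism (indeed a diffeomorphism) between $V$ and $\mathbb{D}$; once this is in hand, the corollary is immediate.

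First I would establish that $f$ is a local diffeomorphism. By the Jacobian Lemma, $J_f=\left|\partial f/\partial z\right|^2\bigl(1-\left|\mu_f\right|^2\bigr)>0$ at every point, so the Jacobian never vanishes. The inverse function theorem then guarantees that $f$ is locally invertible with a smooth local inverse. Combining this with the hypothesis that $f$ is globally bijective, the globally defined $f^{-1}$ agrees locally with these smooth local inverses and is therefore itself smooth, hence continuous. Alternatively, and more cheaply, I would invoke that $Q$ (equivalently $V$) is compact---already used in the proof of the Theorem---together with the fact that $\mathbb{D}$ is Hausdorff: a continuous bijection from a compact space onto a Hausdorff space is automatically a homeomorphism, so $f^{-1}$ is continuous.

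Next I would record that each Voronoi cell $V_i\subset\mathbb{D}$ is connected: by the construction in Eq.~(\ref{voro}) it is the intersection of finitely many closed half-planes with the convex disk $\mathbb{D}$, hence convex and in particular path-connected. Applying the homeomorphism $f^{-1}$ and using that continuous images of connected sets are connected, I would conclude that $f^{-1}(V_i)\subset V$ is connected, i.e.\ a continuous set in the original environment. Smoothness of $f^{-1}$ would further ensure that the cell boundaries map to smooth curves, so that no tearing or splitting can occur.

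The main obstacle is precisely the continuity of the inverse: conformality and a nonvanishing Jacobian by themselves guarantee only that $f$ is a \emph{local} homeomorphism, and a bijective local homeomorphism need not be a global homeomorphism without an extra hypothesis. The crux of the argument is thus to supply that hypothesis---either the positive-Jacobian/inverse-function-theorem route or, more robustly, the compactness of $V$ paired with the Hausdorff property of $\mathbb{D}$---after which preservation of connectedness is routine. A secondary point worth pinning down is the intended meaning of ``continuous set''; I would fix it as topological connectedness, which is exactly the property transported by the homeomorphism.
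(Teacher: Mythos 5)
Your proof is correct, but it takes a genuinely different route from the paper's. The paper reads ``continuous set'' as \emph{open set}: its proof takes a Voronoi cell $V_i\subset\mathbb{D}$ to be open and argues that $f^{-1}(V_i)$ is open in $V$ --- which is nothing more than continuity of $f$ itself (preimages of open sets under a continuous map are open); indeed the paper's intermediate step, that $f^{-1}(V_i)$ ``is the union of some open sets in $V$,'' is asserted rather than derived, and continuity of $f$ is the only honest justification for it. You instead read ``continuous set'' as \emph{connected set}, which forces you to prove the genuinely stronger fact that $f^{-1}:\mathbb{D}\to V$ is continuous: continuity of $f$ alone does not transport connectedness backwards (a continuous bijection can pull a connected set back to a disconnected one, e.g.\ $t\mapsto e^{it}$ on $[0,2\pi)$). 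Both of your routes to this fact are sound --- the positive Jacobian from the paper's Jacobian Lemma plus the inverse function theorem and global bijectivity yields a smooth global inverse, while compactness of $Q$ (already invoked in the paper's Theorem) plus the Hausdorff property of $\mathbb{D}$ gives the homeomorphism more cheaply --- and your convexity argument for the cells is correct. Your reading buys a conclusion that matches the geometric intent (cells do not tear when mapped back); the paper's reading buys brevity at the cost of establishing a weaker, arguably less relevant property. One small correction to your closing discussion: a bijective local homeomorphism \emph{is} automatically a global homeomorphism (local homeomorphisms are open maps, and a continuous open bijection is a homeomorphism), so the ``extra hypothesis'' you worry about is not strictly needed, though supplying compactness does no harm.
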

\begin{proof}
Let $V_i\in \mathbb{D}$ be an open set, and we need to prove that $f^{-1}(V_i)$ is also an open set.
Since $f$ is a bijection, for any $q\in V_i$, there exists a unique $v\in V,f(v)=q$. For each $q$ in $V_i$, there is a unique $v$ in $f^{-1}(q)$ corresponding to it,
\begin{equation*}\label{deqn_ex24a}
  f^{-1}(V_i)=\cup \left \{ v\in V|f(v)\in V_i \right \} ,
\end{equation*}
as $V_i$ is an open set and $f^{-1}(V_i)$ is the union of some open sets in $V$, $f^{-1}(V_i)$ is thus an open set in $V$.
\end{proof}

\begin{coro}\label{cor2}
If the Voronoi cells in $\mathbb{D}$ have no intersections, then the mapped Voronoi partition back to the original space is also non-intersecting.
\end{coro}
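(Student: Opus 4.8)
The plan is to reduce this statement to an elementary property of bijections, since the substantive geometric content has already been secured in Corollary~\ref{cor1}. Because $f:V\to\mathbb{D}$ is bijective, its inverse $f^{-1}$ is a well-defined map, and ``mapping the Voronoi partition back to the original space'' is precisely the application of $f^{-1}$ to each cell $V_i\subset\mathbb{D}$. It therefore suffices to show that $f^{-1}(V_i)$ and $f^{-1}(V_j)$ are disjoint whenever $V_i\cap V_j=\emptyset$ for $i\neq j$.

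The key step is the set-theoretic identity $f^{-1}(V_i)\cap f^{-1}(V_j)=f^{-1}(V_i\cap V_j)$, which holds for any map and in particular needs no appeal to conformality. Invoking the hypothesis that the cells in $\mathbb{D}$ have no intersection, namely $V_i\cap V_j=\emptyset$, we immediately obtain $f^{-1}(V_i)\cap f^{-1}(V_j)=f^{-1}(\emptyset)=\emptyset$. Equivalently, one may argue directly from injectivity of $f^{-1}$: if some $v$ lay in both preimages, then $f(v)$ would belong to both $V_i$ and $V_j$, contradicting their disjointness. Either route establishes that the pulled-back cells are pairwise disjoint.

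The one point deserving care is the interpretation of ``non-intersecting'' for a Voronoi tessellation, whose cells are closed and necessarily meet along shared boundary edges, so the natural reading is that the cells have disjoint interiors. To transfer this refined statement I would use that $f$ is not merely a bijection but a conformal homeomorphism, so by the positivity of the Jacobian (the Jacobian Lemma) $f$ is locally invertible and therefore carries open sets to open sets and boundaries to boundaries. Consequently $f^{-1}$ maps the disjoint interiors in $\mathbb{D}$ to disjoint interiors in $V$, and the shared boundaries back to shared boundaries, so the pulled-back partition is non-intersecting in exactly the same sense.

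The main, and essentially only, obstacle is this bookkeeping about interiors versus boundaries; the disjointness itself is immediate from injectivity and involves no computation. Once the homeomorphism property is used to preserve the interior/boundary decomposition, the corollary follows at once, and it may in fact be regarded as a direct companion to Corollary~\ref{cor1}, which already guarantees that each pulled-back cell is an open set in $V$.
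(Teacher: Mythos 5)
Your proof is correct, and it takes a genuinely different---and in fact more rigorous---route than the paper's. The paper's own proof is a one-sentence geometric appeal: it asserts that the conformal, angle-preserving nature of the mapping makes the Voronoi cells ``retain their shapes and angles'' under the inverse map, which ``effectively prevents any intersections'' (citing a text on computational conformal mapping). Your argument instead isolates the only property that disjointness actually requires: bijectivity. The identity $f^{-1}(V_i)\cap f^{-1}(V_j)=f^{-1}(V_i\cap V_j)$ holds for preimages under \emph{any} map, so disjoint cells in $\mathbb{D}$ pull back to disjoint sets with no appeal to conformality whatsoever; angle preservation is neither necessary nor sufficient for non-intersection, so the paper's stated reason does not really carry the logical weight, whereas yours does. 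Where you do invoke conformality and the positive Jacobian---namely, to read ``non-intersecting'' correctly for closed Voronoi cells that share boundary edges, and to argue that a homeomorphism carries disjoint interiors to disjoint interiors and shared boundaries to shared boundaries---is exactly the place such structure is needed, and this refinement is absent from the paper's proof altogether. The trade-off: the paper's phrasing gestures at the geometric fidelity motivating the whole construction (cells keep their shape, so the pulled-back partition remains a meaningful partition), while your argument proves the stated claim cleanly and in greater generality, since it holds for any bijective homeomorphism, conformal or not, and it dovetails naturally with Corollary~\ref{cor1}, which supplies the openness of each pulled-back cell.
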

\begin{proof}
The conformal and angle-preserving nature of the mapping ensures that Voronoi cells retain their shapes and angles throughout the inverse mapping process \cite{kythe2012computational}, effectively preventing any intersections.
\end{proof}
By partitioning the region in the mapped disk to design the control law for coverage optimization, and subsequently mapping the resulting deployments and paths back to the original surface, this strategy facilitates an effective reduction in the performance function, thereby achieving a suboptimal deployment of MAS.

\section{Simulations}\label{sec:sim}
This section provides simulation results of coverage control algorithm based on conformal mapping, accompanied by a comparison with traditional 3D Voronoi partition.
\begin{figure}[t!]
\centering
\includegraphics[width=3in]{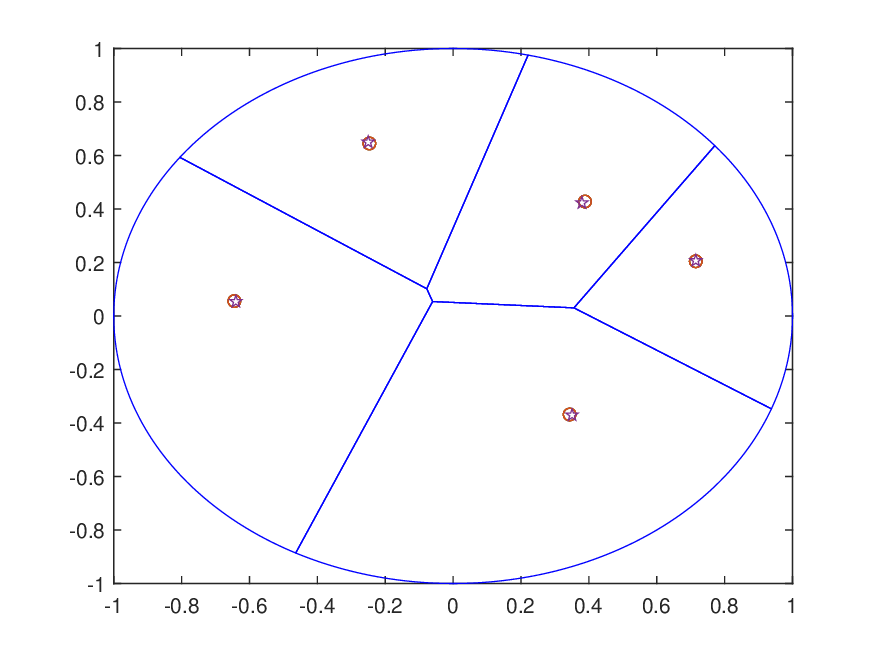}
\caption{\label{fig:part}Multi-agent coverage on the disk.}
\end{figure}
Initially, environmental modeling is carried out by integrating various data, leading to the creation of a surface environment as depicted in Fig.~\ref{fig:mul}(a), and the deformed surface image is depicted in Fig.~\ref{fig:mul}(b). However, in real-world scenarios, especially in the monitoring of surface deformations, the initial deformation of the surface is extremely subtle and difficult to discern in 3D space.
\begin{figure*}[t!]
\centering
\subfloat[\label{fig:a}Method based on conformal mapping]{\includegraphics[width=2.8in]{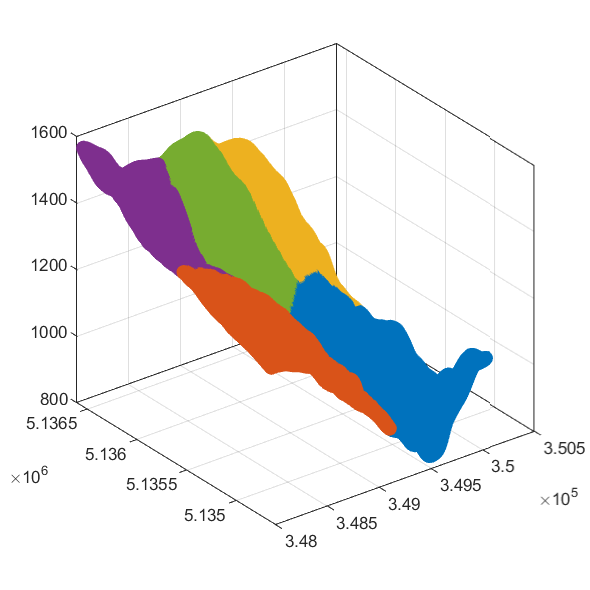}}
\hfil
\subfloat[\label{fig:b}Traditional Voronoi partitioning method]{\includegraphics[width=2.8in]{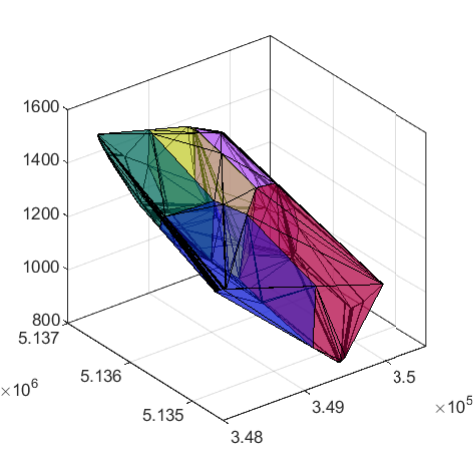}}
\caption{\label{fig_sim} Comparison of surface partitioning diagrams.}
\end{figure*}
Figure~\ref{fig:mul} also depicts simulation results of mapping a 3D surface to a 2D disk using the constructed conformal mapping. Panels (a) and (b) showcase images of the disk with triangular mesh that represents the original and deformed states, respectively.
The deformation characteristics of surface are observable from the disk image with a triangular section. Subsequently, the discs before and after deformation are compared to identify the deformation components. The coverage control algorithm is then applied to the disk, and the final deployment of agents and region partition are illustrated in Fig.~\ref{fig:part}, where the pentagrams denote the centroid of Voronoi cells, and the hollow circles represent the agent positions from the previous iteration.
Thanks to the bijective nature of the mapping, we can map the Voronoi cells on the disk back to the original surface. This eliminates the necessity for partitioning the original environment. The final region partition is depicted in Fig.~\ref{fig_sim}(a). It is evident that the partition is continuous and non-intersecting, as indicated in Corollaries~\ref{cor1} and \ref{cor2}. Applying the conventional 3D Voronoi partition directly to surfaces poses challenges when using the same environmental data~\cite{gou2023vkece}. Traditional Voronoi partition relies on convex regions and necessitates generating the convex hull of the surface prior to partitioning. However, this approach may result in loss of original surface characteristics, as depicted in Fig.~\ref{fig_sim}(b). Although refining the partition by iteratively determining whether points on the surface lie within the convex partition subregion may be feasible, such a process is intricate. Furthermore, it may lead to discontinuous partitioning in certain extreme cases.
Observing the comparison in Fig.~\ref{fig_sim}, it can be seen that, with the same environmental data, the partition method based on conformal mapping can divide the environment into finer contiguous subregions. In contrast, traditional 3D Voronoi partition fails to recognize surface characteristics. Given the bijective nature of conformal mapping, both the final deployment and the path of agents induced by the control laws on the 2D disk cna be mapped back to the original surface, as depicted in Fig.~\ref{path}. During the execution of algorithm, time evolution of coverage performance function is illustrated in Fig.~\ref{perf}. 
It is clear that multi-agent coverage algorithm effectively detects the real-time status of the concerned surface. 
Moreover, it optimizes coverage performance based on surface precursor features, such as deformation metrics.

\begin{figure}[t!]
\centering
\includegraphics[width=3.5in]{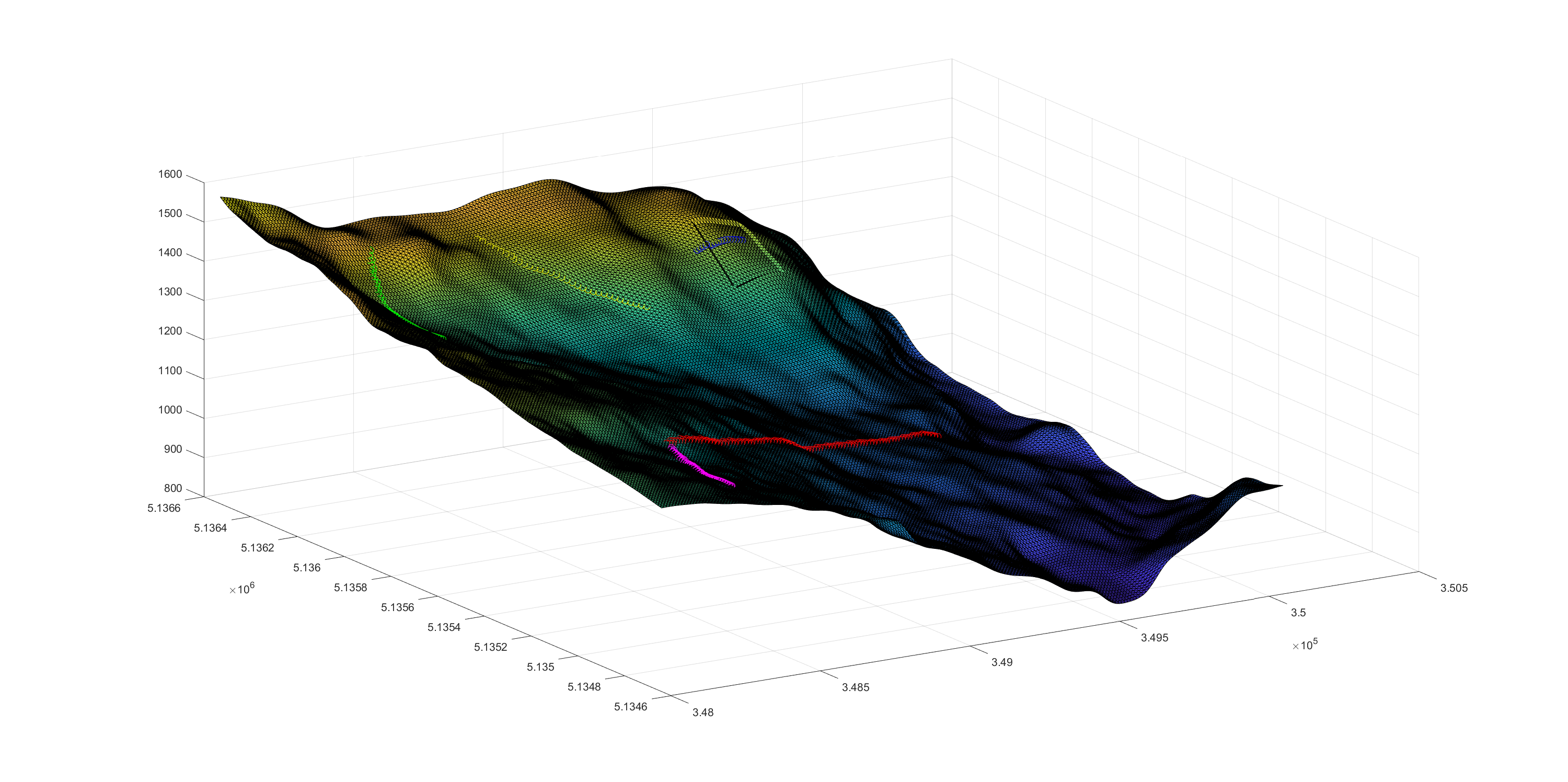}
\caption{\label{path} Path planing on the original surface.}
\end{figure}

\begin{figure}[t!]
\centering
\includegraphics[width=3in]{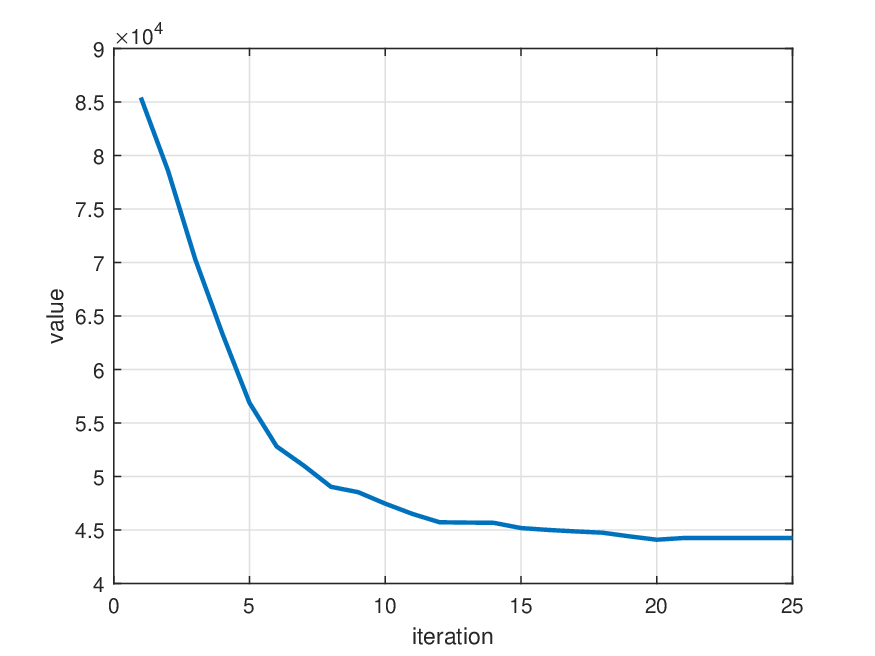}
\caption{\label{perf} Time evolution of coverage performance function.}
\end{figure}


\section{Conclusions}\label{sec:con}
In this paper, we introduced a surface monitoring approach based on 3D spatial modeling. The method utilized environmental transformation to achieve comprehensive coverage. Through simulation experiments, the algorithm demonstrated robust coverage performance with computational efficiency. The process involved mapping and processing the environment, partitioning and discovering optimized deployments in the mapped disc based on surface shape variables, and finally mapping back the optimized deployments and paths of 2D space to the original environment. Furthermore, this method facilitated the observation of surface deformation characteristics to overcome certain limitations in 3D space. This method have promising potential for applications 
in addressing challenges related to feature extraction of geohazards. The versatility of this method extends beyond general surfaces to practical applications such as landslide monitoring. Future work will focus on utilizing this mapping algorithm for extracting precursor features of geohazards and on designing more realistic metrics to optimize the coverage performance.

\bibliographystyle{IEEEtranN}
\bibliography{wx}

\vfill

\end{document}